\numberwithin{equation}{section}
\newtheorem{lemma}[equation]{Lemma}
\newtheorem{thm}[equation]{Theorem}
\theoremstyle{definition}
\newtheorem{defn}[equation]{Definition}
\newtheorem{example}[equation]{Example}
\theoremstyle{remark}
\newtheorem{remark}[equation]{Remark}
\newtheorem*{claim*}{Claim}
\newtheorem*{case*}{Case}
\newcommand{\cC}{\mathcal{C}}
\newcommand{\PP}{\mathbb P}
\newcommand{\ZZ}{\mathbb Z}
\newcommand{\pdim}{\operatorname{pdim}}
\newcommand{\Spec}{\operatorname{Spec}}
\newcommand{\Pic}{\operatorname{Pic}}
\newcommand{\Hom}{\operatorname{Hom}} 
\newcommand{\cO}{{\mathcal O}}
\newcommand{\rank}{\operatorname{rank}}
\newcommand{\codim}{\operatorname{codim}}
\newcommand{\depth}{\operatorname{depth}}
\newcommand{\defi}[1]{\textsf{#1}} 
\newcommand{\beq}{\begin{displaymath}}
\newcommand{\eeq}{\end{displaymath}}
\def\nc{\newcommand}
\def\on{\operatorname}
\nc{\Q}{\mathbb{Q}}
\nc{\RR}{\mathbf{R}}
\nc{\LL}{\mathbf{L}}
\nc{\xra}{\xrightarrow}
\nc{\xla}{\xleftarrow}
\def\a{\alpha}
\def\DM{\operatorname{DM}}
\nc{\into}{\hookrightarrow}
\nc{\onto}{\twoheadrightarrow}
\nc{\OO}{\mathcal{O}}
\nc{\Z}{\mathbb{Z}}
\nc{\cA}{\mathcal{A}}
\nc{\w}{\widehat}
\nc{\End}{\on{End}}
\nc{\res}{\frac{1}{x_0x_1}}
\nc{\tF}{\widetilde{F}}
\nc{\tG}{\widetilde{G}}
\nc{\tf}{\widetilde{f}}
\nc{\Com}{\on{Com}}
\nc{\G}{\mathbb{G}}
\nc{\cG}{\mathcal{G}}
\nc{\cE}{\mathcal E}
\nc{\cF}{\mathcal F}
\nc{\cR}{\mathcal R}
\nc{\cD}{\mathcal D}
\nc{\cB}{\mathcal B}
\nc{\cT}{\mathcal T}
\nc{\cL}{\mathcal L}
\nc{\bM}{\mathbf M}
\nc{\bN}{\mathbf N}
\nc{\U}{\mathbf U}
\nc{\BM}{\mathbf B \mathbf M}
\nc{\Dsg}{\on{D}_{\on{sg}}}
\nc{\fC}{\mathcal{C}}
\nc{\fG}{\mathcal{G}}
\nc{\N}{\mathbb{N}}
\nc{\del}{\partial}
\nc{\cone}{\on{cone}}
\nc{\D}{\on{D}_{\on{diff}}}
\nc{\DMb}{\on{D}^b_{\DM}}
\nc{\Db}{\on{D}^{\on{b}}}
\nc{\Kb}{\on{K}^{\on{b}}}
\nc{\fm}{\mathfrak{m}}
\nc{\Flag}{\on{Flag}}
\nc{\DMmin}{\DM_{\on{min}}}
\nc{\Ddiff}{\on{D}_{\on{diff}}}
\nc{\Dbdiff}{\on{D}^\on{b}_{\on{diff}}}
\nc{\wO}{\widehat{\OO}}
\nc{\wT}{\widehat{T}}
\nc{\from}{\leftarrow}
\nc{\wLL}{\widetilde{\LL}}
\nc{\augCech}{\widetilde{\cC}}
\nc{\Fold}{\on{Fold}}
\nc{\Ext}{\on{Ext}}
\nc{\FF}{\mathbf{F}}
\nc{\Comper}{\Com_{\on{per}}}
\nc{\Unfold}{\on{Unfold}}
\nc{\intHom}{\underline{\Hom}}
\nc{\Ex}{\on{Ex}}
\nc{\tg}{\widetilde{g}}
\def\b{\beta}
\nc{\B}{\mathcal{B}}
\nc{\K}{\mathcal{K}}
\nc{\kos}{\on{Kos}}
\nc{\Perf}{\on{Perf}}
\nc{\tR}{\widetilde{\cR}}
\nc{\X}{\mathcal{X}}
\nc{\Cl}{\on{Cl}}
\nc{\fU}{\mathcal{U}}
\nc{\bU}{\mathbf U}
\nc{\st}{\on{st}}
\nc{\coh}{\on{coh}}
\def\D{\mathcal{D}}
\nc{\tU}{\U}
\nc{\bC}{\mathbf{C}}
\nc{\aux}{\on{aux}}
\def\phi{\varphi}
\nc\Dsing{\on{D}^{\on{sing}}}
\newcommand{\Manoa}{M\=anoa}
\newcommand{\Hawaii}{Hawai\kern.05em`\kern.05em\relax i}
\newcommand{\UHM}{University of \Hawaii \ at \Manoa}
\def\MR#1{}
\title{A short proof of the Hanlon-Hicks-Lazarev Theorem}
\author{Michael K. Brown}
\author{Daniel Erman}
\thanks{The second author was supported by NSF grant 
DMS-2200469.}
\newcommand{\Addresses}{{
	\vskip\baselineskip
  	\footnotesize
  	\noindent \textsc{Department of Mathematics and Statistics, Auburn University, Auburn, AL} \par\nopagebreak
	\noindent \textit{E-mail address:} \texttt{mkb0096@auburn.edu}
	\vskip\baselineskip
	\noindent \textsc{Department of Mathematics, \UHM, Honolulu, HI} \par\nopagebreak
	\noindent \textit{E-mail address:} \texttt{erman@hawaii.edu}
  }}
\date{\today}
\begin{document}

\begin{abstract}
We give a short, new proof of a recent result of Hanlon-Hicks-Lazarev about toric varieties.  As in their work, this leads to a proof of a conjecture of Berkesch-Erman-Smith on virtual resolutions and to a resolution of the diagonal in the simplicial case. 
\end{abstract}

\maketitle

\section{Main result}
 We give a short, new proof of a recent result of Hanlon-Hicks-Lazarev about toric varieties and their multigraded Cox rings.  Throughout, we let $X$ be a simplicial, projective toric variety over an  algebraically closed field $k$ with $\Cl(X)$-graded Cox ring $S$. Our main result (Theorem~\ref{thm:virtual hochster}) was first proven in~\cite{HHL}, but our proof is independent from their methods.
Our approach is more algebraic and simpler, while their approach is more explicit and connects to a wider range of topics, including symplectic geometry and homological mirror symmetry.  See also the work of Favero-Huang~\cite{FH}, which was completed simultaneously with \cite{HHL} and whose main results coincide with some of Hanlon-Hicks-Lazarev's. 

Our interest in these topics begins with a program to extend results on syzygies to multigraded or toric settings.  The basic perspective, introduced by Berkesch-Erman-Smith in~\cite{BES}, is that many classical results about minimal free resolutions will have strong analogues in the toric setting, as long as one replaces minimal free resolutions with the more flexible notion of a virtual resolution.
\begin{defn}
\label{defn:virtual}
Let $M$ be a finitely generated $\Cl(X)$-graded $S$-module.  A \defi{virtual resolution} of $M$ is a free complex $F_\bullet$ of $S$-modules such that there is a quasi-isomorphism $\widetilde{F_\bullet} \xra{\simeq} \widetilde{M}$ of complexes of $\OO_X$-modules.\footnote{If $X$ is smooth, then $\widetilde{F_\bullet}$ consists of sums of line bundles and is sometimes called a line bundle resolution.  See Remark~\ref{rmk:not line bundles} regarding the simplicial case.}
\end{defn}

\noindent The following is a consequence of Hanlon-Hicks-Lazarev's main result~\cite[Theorem~A]{HHL}.

\begin{thm}\label{thm:virtual hochster}
Let $Y$ be a normal toric variety and $Y \into X$ a closed immersion that is a toric morphism \cite[Definition 3.3.3]{CLS}. Denote by $I$ the defining ideal of $Y \into X$ (Definition~\ref{def:ideal}).  The $S$-module $S/I$ admits a virtual resolution of length $\codim(Y\subseteq X)$.
\end{thm}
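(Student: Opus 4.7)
The plan is to reduce the theorem to showing that $S/I$ is Cohen-Macaulay, after which the Auslander-Buchsbaum formula will yield a $\Cl(X)$-graded minimal free resolution of $S/I$ of length exactly $\codim(Y \subseteq X)$---in particular a virtual resolution (indeed, an honest one). The heart of the matter is to identify $\Spec(S/I) \subseteq \Spec(S) = \A^n$ with a normal affine toric variety, so that Cohen-Macaulayness follows from Hochster's theorem.

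First I would unravel Definition~\ref{def:ideal} to verify that $I$ is the defining ideal of the scheme-theoretic closure $\overline{\widehat{Y}} \subseteq \Spec(S)$ of $\widehat{Y} := \pi^{-1}(Y)$, where $\pi\colon \Spec(S) \setminus V(B) \to X$ is the Cox quotient. Normality of $\widehat{Y}$ itself is immediate: $\pi|_{\widehat{Y}}\colon \widehat{Y} \to Y$ is a good quotient by $G := \Hom(\Cl(X), \G_m)$ and $Y$ is assumed normal. The torus of $\A^n$ preserves $\overline{\widehat{Y}}$ and acts on $\widehat{Y}$ with a dense orbit, so $\overline{\widehat{Y}}$ is at least an affine $T$-variety with a dense torus orbit. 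Using the hypothesis that $Y \into X$ is toric, I would exhibit $\overline{\widehat{Y}}$ explicitly as $\Spec k[C \cap M']$ for a strongly convex rational polyhedral cone $C$ built combinatorially from the fan map underlying $Y \into X$ together with the Cox presentation; this forces normality of the closure itself rather than merely of its normalization.

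With $\overline{\widehat{Y}}$ identified as a normal affine toric variety, Hochster's theorem gives that $S/I$ is Cohen-Macaulay. The codimension count
\[
\codim(\overline{\widehat{Y}} \subseteq \A^n) \;=\; \codim(\widehat{Y} \subseteq \Spec(S) \setminus V(B)) \;=\; \codim(Y \subseteq X),
\]
using that $\pi$ has equidimensional fibers and that the closure is taken inside an open subscheme of $\A^n$, together with the Auslander-Buchsbaum formula over $S$, then gives $\pdim_S(S/I) = n - \dim_S(S/I) = \codim(Y \subseteq X)$, as required.

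I expect the main obstacle to be the middle step: the explicit combinatorial identification of $\overline{\widehat{Y}}$ as a normal affine toric variety. Closures of open subschemes of normal varieties can easily fail to be normal (and hence fail to be Cohen-Macaulay), so the argument genuinely uses the toric hypothesis on $Y \into X$; handling the boundary behavior of $\widehat{Y}$ inside $\A^n$ via the fan-theoretic data is where the bulk of the work will go.
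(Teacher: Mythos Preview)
Your strategy has a genuine gap at the ``heart of the matter'' step: the closure $\overline{\widehat{Y}} = \Spec(S/I)$ is \emph{not} in general a normal affine toric variety, so you cannot invoke Hochster's theorem for $S/I$ directly. Two things go wrong. First, when $\Cl(X)$ has torsion the group $G = \Hom(\Cl(X),\G_m)$ is disconnected, and $\overline{\widehat{Y}}$ can break into several irreducible components (see Lemma~\ref{lem:technical}(1)); in particular $S/I$ is then not even a domain. Second, and more fundamentally, even when $\Cl(X)$ is torsion-free the semigroup you obtain need not be saturated: Example~\ref{ex:P112 diagonal} takes $X = \PP(1,1,2)$ and the diagonal $X \subseteq X\times X$, where $\Cl(X\times X)\cong\Z^2$, and computes $T/I_\Delta \cong k[x_0,x_1,x_2,tx_0,tx_1,t^2x_2]$, which is explicitly non-normal ($tx_2$ is integral over it). So your claimed identification $S/I \cong k[C\cap M']$ cannot hold, and the argument that $S/I$ is Cohen--Macaulay via normality collapses.

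The paper's fix is precisely to exploit the slack in the word ``virtual'': rather than resolving $S/I$, resolve its normalization $R$. The components of $\Spec R$ are genuine normal affine toric varieties, so Hochster's theorem applies to $R$, and Auslander--Buchsbaum gives $\pdim_S R = \codim(Y\subseteq X)$ exactly as you outlined. The minimal free resolution $F_\bullet$ of $R$ is then a virtual resolution of $S/I$ because the normalization map $S/I \to R$ becomes an isomorphism after sheafifying on $X$ (since $Y$ is normal, $\widetilde{S/I}$ and $\widetilde{R}$ are both $\OO_Y$). Your codimension count and Auslander--Buchsbaum endgame are fine; the missing idea is passing to the normalization and using that virtual resolutions only need to agree with $S/I$ sheaf-theoretically.
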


And here is our short proof of Theorem~\ref{thm:virtual hochster}. The proof relies on some elementary facts about toric varieties that we recall in Lemma~\ref{lem:technical} below.

\begin{proof}[Proof of Theorem~\ref{thm:virtual hochster}]
The Cox ring $S$ of $X$ is positively $\Cl(X)$-graded~\cite[Definition~A.1, Example~A.2]{BE}, and so we may consider $\Cl(X)$-graded minimal free resolutions of $\Cl(X)$-graded $S$-modules. Let $R$ be the normalization of $S/I$ and $F_\bullet$ the minimal free resolution of $R$ as an $S$-module.  Since $Y$ is normal, $\widetilde{R}=\cO_Y$ as a sheaf on $X$, and so $F_\bullet$ is a virtual resolution of $S/I_Y$. By Lemma~\ref{lem:technical}(1) and~\cite[Theorem 1.1.17 and Proposition 1.3.8]{CLS}, the ring $R$ is a product of affine semigroup rings of the same dimension. Hochster's Theorem therefore implies that each component of $R$ is a Cohen-Macaulay ring~\cite[Theorem 1]{hochster}. It follows that $R$ is also a Cohen-Macaulay $S/I$-module: indeed, we have $\dim(R) = \dim(S/I)$, and since $R$ is a finitely generated $S/I$-module~\cite[Theorem 4.14]{eisenbudbook}, any system of parameters on $S/I$ is a system of parameters on each component of $R$ and hence a regular sequence. The length of $F_\bullet$ is the projective dimension of $R$, which, by the Auslander-Buchsbaum formula~\cite[Theorem 19.9]{eisenbudbook}, is equal to $\depth_S(S) - \depth_S(R) = \dim(S) - \dim(S/I)$ (while the version of the Auslander-Buchsbaum formula we cite pertains to local rings, the desired result for the polynomial ring $S$ follows by~\cite[Proposition 1.5.15]{BH}). Lemma~\ref{lem:technical}(2) therefore implies that the length of $F$ is equal to $\codim(Y\subseteq X)$.
\end{proof}

We now describe
applications of Theorem~\ref{thm:virtual hochster} and their history.  For a fuller discussion, see~\cite[\S1]{HHL}.  We start with a 
special case, first proven by Hanlon-Hicks-Lazarev:
\begin{thm}[\cite{HHL} Corollary B]\label{thm:virtual diagonal}
The coordinate ring of the diagonal embedding $X\subseteq X\times X$ admits a virtual resolution of length $\dim X$.
\end{thm}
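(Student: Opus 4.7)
The plan is to apply Theorem~\ref{thm:virtual hochster} directly, with the role of the ambient toric variety played by $X\times X$ and the closed subvariety being the diagonal copy of $X$. Since the heavy lifting has already been done in the proof of Theorem~\ref{thm:virtual hochster}, the remaining work is to check that the hypotheses apply in this setup.

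First I would verify that $X\times X$ is itself a simplicial projective toric variety: its fan is the product of the fan of $X$ with itself, and both simpliciality and projectivity are preserved under products of fans (tensoring two ample line bundles on the factors yields an ample line bundle on the product). This brings us within the hypotheses of Theorem~\ref{thm:virtual hochster}. Second, the diagonal embedding $\Delta \colon X \into X\times X$ is a toric morphism in the sense of \cite[Definition 3.3.3]{CLS}: on lattices it is induced by the diagonal $N \to N\oplus N$, $n\mapsto (n,n)$, which sends any cone $\sigma$ of the fan of $X$ into the cone $\sigma\times\sigma$ of the product fan. Normality of $Y=X$ is automatic, since every toric variety is normal by definition.

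Finally, since $\dim(X\times X) = 2\dim X$, we have $\codim(\Delta \subseteq X\times X) = \dim X$, so Theorem~\ref{thm:virtual hochster} produces a virtual resolution of $S_{X\times X}/I_\Delta$ of length $\dim X$. The only subtlety I anticipate is a bookkeeping check that the ideal $I$ attached to the diagonal embedding via Definition~\ref{def:ideal} really does cut out the coordinate ring of the diagonal in the Cox ring of $X\times X$; this should follow directly from unpacking that definition in the present case, so I do not expect any genuine obstacle here.
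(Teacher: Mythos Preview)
Your proposal is correct and follows exactly the paper's approach: the paper's proof is a single sentence, namely that the diagonal $X\subseteq X\times X$ satisfies the hypotheses of Theorem~\ref{thm:virtual hochster}, and you have simply spelled out the verifications (simpliciality and projectivity of $X\times X$, that $\Delta$ is a toric morphism, normality of $X$, and the codimension computation) that the paper leaves implicit.
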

Special cases of Theorem~\ref{thm:virtual diagonal} were studied in~\cite{BE,brown-sayrafi,canonaco}, and ~\cite{BPS,anderson} study closely related questions.
It was known that this result would immediately yield proofs of two conjectures that also had received independent interest. The first conjecture is due to Berkesch-Erman-Smith~\cite[Question 1.3]{BES} and was proven by Hanlon-Hicks-Lazarev:

\begin{thm}[\cite{HHL} Corollary C]\label{thm:virtual syzygy}
Any module $M$ as in Definition~\ref{defn:virtual} has a virtual resolution of length  $\leq \dim X$.
\end{thm}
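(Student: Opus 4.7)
The plan is to apply a Fourier--Mukai-style convolution against the virtual resolution of the diagonal supplied by Theorem~\ref{thm:virtual diagonal}, in the spirit of Beilinson's resolution of the diagonal on $\PP^n$. Let $p_1, p_2 \c X \times X \to X$ denote the two projections. Since the Cox ring of $X \times X$ is $S \otimes_k S$ with class group $\Cl(X) \oplus \Cl(X)$, Theorem~\ref{thm:virtual diagonal} yields a complex $F_\bullet$ of finitely generated graded free $(S \otimes_k S)$-modules, of length $\dim X$, whose sheafification is quasi-isomorphic to $\OO_\Delta$. Each term is a finite direct sum of rank-one free modules $S(a) \otimes_k S(b)$; let $\{(a_{ij}, b_{ij})\}$ denote the finite set of twists that appears across $F_\bullet$.

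By Serre vanishing on the projective variety $X$, we may choose a class $d \in \Cl(X)$ with
\[
H^p(X, \widetilde M(b_{ij} + d)) = 0 \quad \text{for all } p > 0 \text{ and all } (i,j).
\]
Replacing $F_\bullet$ by its twist $F_\bullet(-d, d)$ still gives a virtual resolution of $\OO_\Delta$ of the same length: the line bundle $\OO_{X \times X}(-d, d)$ restricts to $\OO_X$ along the diagonal, so $\OO_\Delta(-d, d) \isom \OO_\Delta$ by the projection formula.

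Now define a complex $G_\bullet$ of finitely generated graded free $S$-modules by
\[
G_i = \bigoplus_j S(a_{ij} - d)^{\oplus h_{ij}}, \qquad h_{ij} := \dim_k H^0\bigl(X, \widetilde M(b_{ij} + d)\bigr),
\]
with differentials obtained term by term from those of $F_\bullet(-d, d)$ via the Beilinson-type identification $Rp_{1*}\bigl(\OO_X(a) \boxtimes \widetilde M(b)\bigr) \isom \OO_X(a) \otimes_k H^0(X, \widetilde M(b))$, valid whenever $\widetilde M(b)$ is acyclic. The diagonal formula $\widetilde M \simeq Rp_{1*}\bigl(p_2^* \widetilde M \Lotimes \OO_\Delta\bigr)$ (a consequence of $p_1 \circ \Delta = \id_X$ and the projection formula), applied with $\OO_\Delta$ replaced by the quasi-isomorphic $\widetilde{F_\bullet}(-d, d)$, then yields $\widetilde{G_\bullet} \simeq \widetilde M$ in the derived category of $X$. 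The length of $G_\bullet$ is at most the length of $F_\bullet$, namely $\dim X$, as required.

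The main obstacle is the bookkeeping needed to confirm that this sheafy convolution genuinely produces a complex of \emph{free $S$-modules} with well-defined $\Cl(X)$-graded differentials, and not merely an object of the derived category. This is precisely what the Serre-vanishing step buys: once every higher $R\Gamma$ in sight vanishes, the derived pushforward collapses termwise to honest direct sums of line bundles, the induced transition maps are ordinary sheaf morphisms, and these lift canonically back to graded homomorphisms between free modules over the Cox ring $S$.
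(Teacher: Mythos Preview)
Your proposal is correct and follows essentially the same approach as the paper: the paper's entire proof of this theorem is the one-line remark ``one can simply follow the method of~\cite[Proof of Theorem~1.2]{BES},'' and that method is precisely the Beilinson/Fourier--Mukai convolution against a resolution of the diagonal, together with a Serre-vanishing twist to collapse the pushforward termwise---exactly what you have written out.
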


Hilbert's Syzygy Theorem gives a bound of $\dim S=\dim X + \rank \Cl(X)$; Theorem~\ref{thm:virtual syzygy} implies that the added flexibility of virtual resolutions allows for significantly shorter resolutions, especially when $\rank \Cl(X)$ is large.  See~\cite{BES,HNV,berkesch-klein-loper-yang} and elsewhere for many examples of this phenomenon.   
Prior to~\cite{HHL}, Theorem~\ref{thm:virtual syzygy} had been proven in several special cases: when $\rank \Pic(X)=1$ it essentially follows from Hilbert's Syzygy Theorem; for products of projective spaces it was shown in~\cite[Theorem~1.2]{BES} (see also \cite[Corollary~1.14]{EES}); Yang proved it for any monomial ideal in the Cox ring of a smooth toric variety~\cite{yang}; and Brown-Sayrafi proved it for smooth projective toric varieties of Picard rank 2~\cite{brown-sayrafi}.

The second conjecture, due to Orlov, is the special case of \cite[Conjecture 10]{orlov} for toric varieties. This was first proven by Favero-Huang in \cite[Theorem~1.2]{FH}, and independently and essentially simultaneously in ~\cite[Corollary E]{HHL}.
\begin{thm}\label{thm:rouquier}
The Rouquier dimension of $D^b(X)$ equals $\dim X$.
\end{thm}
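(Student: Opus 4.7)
The lower bound Rouquier dimension $\geq \dim X$ is classical (due to Rouquier, and it holds on any reduced separated $k$-scheme of finite type of dimension $\dim X$), so the content is the upper bound, for which I use the standard resolution-of-the-diagonal argument now enabled by Theorem~\ref{thm:virtual diagonal}.

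Apply Theorem~\ref{thm:virtual diagonal} to the diagonal embedding $X \into X \times X$ to obtain a virtual resolution $F_\bullet$ of length $\dim X$ over the Cox ring $S \otimes_k S$ of $X\times X$. A $\Cl(X \times X) = \Cl(X)^2$-graded free $(S \otimes_k S)$-module of bidegree $(a,b)$ sheafifies to the rank one reflexive sheaf $\OO_X(a) \boxtimes \OO_X(b)$, so $\wt{F_\bullet}$ is a length-$\dim X$ complex on $X \times X$, quasi-isomorphic to $\OO_\Delta$, whose terms are finite direct sums of such box products. Let $B \subseteq \Cl(X)$ be the finite set of classes that occur as second factors of these box products, and set
\[
G := \bigoplus_{b \in B} \OO_X(b) \in \Db(X).
\]

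For $\F \in \Db(X)$, the Fourier--Mukai-type identity
\[
\F \simeq R\pi_{2,*}\bigl(\pi_1^*\F \Lotimes \OO_\Delta\bigr) \simeq R\pi_{2,*}\bigl(\pi_1^*\F \Lotimes \wt{F_\bullet}\bigr),
\]
where $\pi_1,\pi_2 \c X \times X \to X$ are the projections, exhibits $\F$ as an iterated cone involving only $\dim X$ cones whose successive contributions have the form $R\pi_{2,*}\bigl(\pi_1^*\F \otimes (\OO_X(a)\boxtimes \OO_X(b))\bigr)$. By the projection formula and K\"unneth, each such contribution simplifies to $R\Gamma(X, \F \otimes \OO_X(a)) \otimes_k \OO_X(b)$, a finite direct sum of shifts of the summand $\OO_X(b)$ of $G$. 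Hence every $\F \in \Db(X)$ is built from $G$ by sums, summands, shifts, and $\dim X$ cones, proving the Rouquier dimension is $\leq \dim X$.

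The main technical obstacle is to justify the Fourier--Mukai manipulation, the projection formula, and the K\"unneth identification on the possibly singular variety $X \times X$ for the rank one reflexive sheaves appearing in $\wt{F_\bullet}$ (compare the footnote to Definition~\ref{defn:virtual}). On a simplicial projective toric variety these reduce to a routine verification: once the quasi-isomorphism $\wt{F_\bullet}\simeq \OO_\Delta$ is in place in $\Db(X\times X)$, the derived base-change identity $R\pi_{2,*}\Delta_* \Delta^* \pi_1^* = \id$ supplies the first equivalence, and the box-product structure on each term of $\wt{F_\bullet}$ makes the projection formula and K\"unneth decomposition formal.
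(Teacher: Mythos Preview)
Your argument is precisely the ``standard technique on derived categories'' that the paper alludes to. In fact the paper gives no proof of Theorem~\ref{thm:rouquier} beyond the sentence ``one can use standard techniques on derived categories; see, e.g., the proof of~\cite[Corollary~E]{HHL},'' so you have supplied more detail than the paper does, and the Fourier--Mukai/resolution-of-the-diagonal strategy, the choice of generator $G$, and the cone count are all the intended ones.

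The only point deserving more care is your last paragraph. When $X$ is simplicial but not smooth the sheaves $\OO_X(a)$ are merely reflexive and need not have finite Tor-dimension (cf.\ Remark~\ref{rmk:not line bundles}), so for a general $\F\in\Db(\Coh X)$ the object $\F\Lotimes\OO_X(a)$ may be unbounded below; then $R\Gamma(X,\F\Lotimes\OO_X(a))\otimes_k\OO_X(b)$ is not a \emph{finite} sum of shifts of $\OO_X(b)$, and the graded pieces of your stupid filtration escape $\Db(\Coh X)$. This is a genuine wrinkle, not a routine verification. The clean remedy---and the one employed in the reference the paper cites---is to run the argument on the associated smooth toric Deligne--Mumford stack, where the terms of the diagonal resolution are honest line bundles and the projection/K\"unneth identities apply without caveat, and then descend to $X$. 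For smooth $X$ your argument is complete as written.
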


Special cases of Theorem~\ref{thm:rouquier} had been established in~\cite{BC, BF, BDM, BFK} before Favero-Huang and Hanlon-Hicks-Lazarev proved it in general. The full version of Orlov's Conjecture states that Theorem~\ref{thm:rouquier} extends to any smooth quasi-projective variety; see \cite[\S 1.2]{BC} for a list of known cases of this conjecture. 

Theorem~\ref{thm:virtual hochster} easily implies Theorems~\ref{thm:virtual diagonal}, ~\ref{thm:virtual syzygy} and~\ref{thm:rouquier}. To prove Theorem~\ref{thm:virtual diagonal}, observe that the diagonal $X \subseteq X \times X$ satisfies the conditions of Theorem~\ref{thm:virtual hochster}. To prove Theorem~\ref{thm:virtual syzygy}, one can simply follow the method of~\cite[Proof of Theorem~1.2]{BES}.  For Theorem~\ref{thm:rouquier}, one can use standard techniques on derived categories; see, e.g., the proof of ~\cite[Corollary E]{HHL}.

\medskip

Our proof of Theorem~\ref{thm:virtual hochster} is quite simple, perhaps embarrassingly so given the prior partial results on these questions cited above.
It is not yet clear how to compare our resolutions to those obtained in~\cite{HHL}, but we believe that the two constructions agree in the case of Theorem~\ref{thm:virtual diagonal}. Their work gives a 
creative perspective on building these resolutions, drawing motivation from the symplectic side of the mirror symmetry functor and involving a wide array of ideas.
\footnote{In a different direction, we refer to Borisov's work \cite{borisov} for an alternative proof of Hochster's Theorem \cite[Theorem 1]{hochster}---the main ingredient of our proof of Theorem~\ref{thm:virtual hochster}---and an explanation of how the techniques used there relate to mirror symmetry.} The resolutions they obtain are quite explicit; indeed, their resolution of the diagonal yields a canonical generating set for the derived category of any normal toric variety, proving a claim of Bondal \cite[Corollary D]{HHL}. However, some algebraic aspects of their constructions are harder to determine.  For instance, if $F_\bullet$ is the free complex of $S$-modules corresponding to one of their resolutions, their work implies that the modules $H_i(F_\bullet)$ correspond to the zero sheaf on $X$ for all $i>0$, but it is not clear whether $H_i(F_\bullet)$ equals the zero module on the nose, i.e. it is not clear if $F_\bullet$ is acyclic as a complex of $S$-modules. The $S$-module that arises as $H_0(F_\bullet)$ is also unclear. By comparison, the complexes that arise in our construction are always acyclic, and they resolve normalizations of coordinate rings. However, we are not able to give as explicit of a description of the terms.  It would be very interesting to better compare these complexes, and to compare them with those in~\cite{BE, brown-sayrafi}.  Favero-Huang's approach~\cite{FH} can almost certainly yield all of the above results as well, and it would be interesting to compare to those resolutions too.

\begin{remark}
As our resolutions from Theorem~\ref{thm:virtual hochster} rely only on standard algebraic constructions, they can be directly computed in {\em Macaulay2} \cite{M2}.  The constructions in~\cite{HHL} are explicit, but due to their novelty, computing them in practice requires more effort.  Of course, if one could show that the two constructions coincide, this would shed more light on both.
\end{remark}
\section{Some elementary facts about toric varieties}
\begin{defn}
\label{def:ideal}
Let $X$, $Y$, and $S$ be as in Theorem~\ref{thm:virtual hochster}, $B \subseteq S$ the irrelevant ideal of $X$, and $Z$ the closure in $\Spec(S)$ of the inverse image of $Y$ under the canonical surjection $\pi \colon \Spec(S) \setminus V(B) \to X$. The \defi{defining ideal of $Y$ in $X$} is the radical ideal $I \subseteq S$ corresponding to the closed subset $Z \subseteq \Spec(S)$. 
\end{defn}

\begin{lemma}
\label{lem:technical}
Let $Z$ and $I$ be as in Definition~\ref{def:ideal}.
\begin{enumerate}
\item The irreducible components of $Z$ are affine toric varieties of the same dimension. Furthermore, if the divisor class group $\Cl(X)$ is torsion-free, then $Z$ is irreducible.
\item We have $\dim(S) - \dim(S/I) = \codim(Y \subseteq X)$.
\end{enumerate}
\end{lemma}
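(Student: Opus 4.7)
The plan is to give a group-theoretic description of $Z$ via the Cox construction of $X$. Let $G := \Hom_\ZZ(\Cl(X), \G_m)$, so that $\pi \colon \Spec(S) \setminus V(B) \to X$ realizes $X$ as a good geometric quotient of $\Spec(S) \setminus V(B)$ by $G$; in particular $\pi$ is open and surjective. Let $T_{\Spec S} \subseteq \Spec(S)$ be the big torus of $\Spec(S) \cong \A^{\dim S}$, and $T_X \subseteq X$ the dense torus of $X$. The standard divisor sequence for toric varieties gives a short exact sequence of algebraic groups
$$1 \to G \to T_{\Spec S} \to T_X \to 1.$$

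Next, I would translate the statement about $Y$ into the torus picture. Since $Y \hookrightarrow X$ is a toric closed immersion, the dense torus $T_Y \subseteq Y$ embeds as a subtorus of $T_X$, and $Y = \overline{T_Y}$ in $X$. Set $H := \pi^{-1}(T_Y)$; pulling back the displayed SES along the inclusion $T_Y \subseteq T_X$ realizes $H$ as a closed subgroup of the torus $T_{\Spec S}$. Because $\pi$ is open and surjective, preimage commutes with closure, so $\pi^{-1}(Y) = \overline{H}$ in $\Spec(S) \setminus V(B)$, and hence $Z = \overline{H}$ in $\Spec(S)$.

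For part (1), I would invoke the structure of closed subgroups of algebraic tori: any such subgroup is a finite disjoint union of cosets of its identity component, which is itself a subtorus. Therefore $H$ is a disjoint union of finitely many subtori of $T_{\Spec S}$, all of dimension $\dim T_Y + \dim G$. Their closures in $\Spec(S) \cong \A^{\dim S}$ are each the closure of a subtorus orbit in affine space, so are affine toric varieties, and together they give the irreducible components of $Z$. If $\Cl(X)$ is torsion-free, then $G$ is a torus and in particular connected; combined with the connectedness of $T_Y$, the SES forces $H$ to be connected, so $Z$ is irreducible.

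Part (2) is then an immediate dimension count: from the description of $Z$ above we have $\dim Z = \dim H = \dim T_Y + \dim G$, and $\dim S = \dim T_X + \dim G$ by the SES, so
$$\dim(S) - \dim(S/I) = \dim S - \dim Z = \dim T_X - \dim T_Y = \codim(Y \subseteq X).$$
The main obstacle I anticipate is cleanly justifying the identity $\pi^{-1}(\overline{T_Y}) = \overline{\pi^{-1}(T_Y)}$; this uses only that $\pi$ is open (a standard property of good quotients by reductive groups), but deserves explicit mention. Everything else is bookkeeping with the divisor sequence.
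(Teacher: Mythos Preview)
Your argument is correct and is precisely the Cartier dual of the paper's proof. Where the paper forms the pushout of character lattices along $M_X \onto M_Y$ and $M_X \to \Z^{\dim S}$ to obtain a finitely generated abelian group $M' \cong \Z^r \oplus A$, you form the fiber product of the corresponding diagonalizable groups to obtain $H = \pi^{-1}(T_Y)$; under the anti-equivalence between diagonalizable groups and finitely generated abelian groups these are the same construction, with $H \cong \Spec k[M']_{\mathrm{red}}$ and the torsion part $A$ serving as the character group of $\pi_0(H)$. The paper then reads off the irreducible components of $Z$ from the factorization of $k[M']_{\mathrm{red}}$ as a product of Laurent polynomial rings, while you read them off from the coset decomposition $H = \coprod_i g_i H^0$; and the paper's observation that $A = 0$ when $\Cl(X)$ is torsion-free becomes your observation that $G$, hence $H$, is connected. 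One minor slip worth fixing: for $g_i \ne e$ the cosets $g_i H^0$ are translates of a subtorus rather than subtori, but since translation by $g_i \in T_{\Spec S}$ extends to a toric automorphism of $\A^{\dim S}$, their closures are still affine toric varieties and your conclusion stands.
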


\begin{proof}
Since $Y \into X$ is a toric morphism, it induces an embedding $T_Y \into T_X$ on tori and hence a surjection $p \colon M_X \onto M_Y$ of lattices. Taking the pushout of the surjection $p$ and the canonical map $M_X \to \Z^{\dim{S}}$ yields the morphism
\begin{equation}
\label{eqn:ses}
\xymatrix{
0\ar[r]& M_X \ar[r]\ar[d]^-{p}& \ZZ^{\dim S}\ar[r]\ar[d]^{q} &\Cl(X)\ar[r]\ar[d]^{\cong}&0\\
0\ar[r]& M_Y \ar[r]& M' \ar[r]&\Cl(X)\ar[r]& 0
}
\end{equation}
of exact sequences. The abelian group $M'$ is isomorphic to $\Z^r \oplus A$, where $r$ is defined to be $\dim(S) - \dim(X) + \dim(Y)$, and $A$ is some finite abelian group. We observe that $I$ coincides with the radical of $J \coloneqq \ker(S \into k[\Z^{\dim{S}}] \xra{q} k[M'])$; note that $k[M']$ need not be reduced when $\on{char}(k) \ne 0$, since $M'$ may have torsion, and so $J$ need not be radical. Let us verify that $I = \on{rad}(J)$: since $p$ is surjective, the Snake Lemma implies that $q$ is surjective, and so $J$ is the defining ideal of the closure of $\Spec(k[M'])$ in $\Spec(S)$. Diagram~\eqref{eqn:ses} induces the following morphism of short exact sequences of algebraic groups:
$$
\xymatrix{
0 & \ar[l] T_X & \ar[l]_-{\a} \Spec(k[\Z^{\dim S}]) & \ar[l] \ker(\a) & \ar[l] 0 \\
0 & \ar[l] T_Y \ar[u] & \ar[l]_-{\b} \Spec(k[M']) \ar[u] & \ar[l]   \ker(\b) \ar[u]_{\cong}& \ar[l]  0.
}
$$
It follows that $\a^{-1}(T_Y)$ is equal to the image of $\Spec(k[M'])$ in $\Spec(k[\Z^{\dim S}])$. Since $Z$ is equal to the closure of $\a^{-1}(T_Y)$ in $\Spec(S)$, we conclude that $I = \on{rad}(J)$. 

Writing $R = k[\Z^r]$ and $A = \bigoplus_{i = 1}^t \Z / (n_i)$, we have $$
k[M'] \cong R[z_1, \dots, z_t] /(z_1^{n_1} -1, \dots, z_t^{n_t} - 1).
$$
The quotient of $k[M']$ by its nilradical is therefore a product of copies of $R$, and so $I$ is a finite intersection of prime ideals arising as kernels of ring homomorphisms $S \to R$. It therefore follows from \cite[Proposition 1.1.8]{CLS} that the irreducible components of $Z$ are affine toric varieties of dimension $r$. If $\Cl(X)$ is torsion-free, then the bottom row of Diagram~\eqref{eqn:ses} splits, and so $A = 0$, which means $I$ is prime. This proves (1). As for (2): we have shown that $\dim(Z) = r$, which is precisely $\dim(S) - \codim(Y \subseteq X)$. 
\end{proof}

\section{Examples}

\begin{example}
\label{ex:Pn diagonal}
Let $X=\PP^n$ and $T=k[x_0,\dots, x_n,y_0,\dots,y_n]$, the Cox ring of $X\times X$.  Let $I_\Delta\subseteq T$ be the defining ideal (Definition~\ref{def:ideal}) of the diagonal $X\subseteq X\times X$, i.e. the ideal corresponding to the closure of the set of points in  $\Spec(T)$ of the form $(x_0, \dots, x_n, tx_0, \dots, tx_n)$, where $t\in k^*$. One easily checks that $I_\Delta$ is the kernel of the map $S \to  k[x_0, \dots, x_n, y_0, \dots, y_n, t]$ given by $x_i \mapsto x_i$ and $y_i \mapsto tx_i$, and so $T/I_\Delta$ is isomorphic to the normal semigroup ring $k[x_0, \dots, x_n, tx_0, \dots, tx_n]$. The ideal $I_\Delta$ is generated by the $2\times 2$ minors of the matrix
$
\begin{pmatrix}
x_0&x_1&\cdots &x_n\\
y_0&y_1&\cdots &y_n
\end{pmatrix}.
$
More specifically: these minors vanish on $\Delta$, and since this is a generic matrix, the ideal of $2\times 2$ minors is prime of codimension $n$.  As $T/I_\Delta$ is already normal, the virtual resolution of $T/I_\Delta$ arising from Theorem~\ref{thm:virtual hochster} is just the minimal free resolution of $T/I_\Delta$, which is given by the Eagon-Northcott complex on this matrix.
\end{example}

\begin{example}\label{ex:P112 diagonal}
Let $X$ be the weighted projective space $\PP(1,1,2)$ and $T$ the Cox ring $k[x_0,x_1,x_2,y_0,y_1,y_2]$ of $X\times X$.  
By a calculation similar to Example~\ref{ex:Pn diagonal}, the ring $T/I_\Delta$ is isomorphic to the semigroup ring
$
k[x_0,x_1,x_2,tx_0,tx_1,t^2x_2],
$
which is not normal because $tx_2$ lies in the fraction field and satisfies the integral equation $(tx_2)^2  - x_2 \cdot (t^2x_2)=0$.  Let $R$ be the normalization of $T/I_\Delta$. A presentation matrix for $R$ as a $T$-module is given as follows, where the rows correspond to the generators $1$ and $tx_2$:
\[
\bordermatrix{
&&&&& \cr
1 &x_{1}y_{0}-x_{0}y_{1}&x_{2}y_{0}&x_{2}y_{1}&x_{0}y_{2}&x_{1}y_{2}\cr
tx_2& 0&-x_{0}&-x_{1}&-y_{0}&-y_{1}
}.
\]

The free resolution of $R$ as a $T$-module is given by:
\begin{footnotesize}
\begin{equation}
\label{eq:res}
\begin{matrix} T \\ \oplus\\  T(-1,-1)\end{matrix} \xleftarrow{\left[\begin{smallmatrix} x_{1}y_{0}-x_{0}y_{1}&x_{2}y_{0}&x_{2}y_{1}&x_{0}y_{2}&x_{1}y_{2}\\
0&-x_{0}&-x_{1}&-y_{0}&-y_{1}
\end{smallmatrix}\right]} 
\begin{matrix}
T(-1,-1) \\
\oplus\\ 
T(-2,-1)^2 \\ \oplus \\
T(-1,-2)^2 
\end{matrix}
\xleftarrow{\left[\begin{smallmatrix} 
-x_{2}&0&-y_{2}\\
x_{1}&-y_{1}&0\\
-x_{0}&y_{0}&0\\
0&-x_{1}&-y_{1}\\
0&x_{0}&y_{0}
\end{smallmatrix}\right]}  
\begin{matrix}
T(-3,-1) \\
\oplus\\ 
T(-2,-2)\\ \oplus \\
T(-1,-3) 
\end{matrix}
 \gets 0.
\end{equation}
\end{footnotesize}

\noindent Additionally: we have the short exact sequence
$
0\to T/I_\Delta \to R \to Q \to 0,
$
and $Q = tx_2\cdot k[x_2,y_2]$.  One can directly compute that the sheaf $\widetilde{Q}$ corresponding to $Q$ is the zero sheaf on $X \times X$. In fact, since $Q$ is annihilated by $x_0,x_1,y_0$ and $y_1$, we can reduce to checking that $\widetilde{Q}$ is also zero on the affine patch $D(x_2y_2)$.  The global sections of $\widetilde{Q}$ on this patch are $Q[x_2^{-1},y_2^{-1}]_{(0,0)}=0$, and thus $\widetilde{Q}=0$ as desired.
\end{example}

\begin{remark}\label{rmk:not line bundles}
Since $\OO(-1)$ and $\OO(-3)$ are not vector bundles on $\PP(1, 1, 2)$, the resolution~\eqref{eq:res} does not induce a locally free resolution of the diagonal. Indeed, virtual resolutions are not guaranteed to induce locally free resolutions of $\OO_X$-modules unless $X$ is smooth. Alternatively, as in~\cite{HHL}, one could consider the corresponding toric stack.
\end{remark}

\begin{remark}
In many of the prior known cases of Theorem~\ref{thm:virtual syzygy}, a slightly stronger result was proven.  Namely, it was shown that for any such $M$, there exists another module $M'$ satisfying $\widetilde{M}=\widetilde{M'}$ and $\pdim(M')\leq \dim X$; see~\cite{EES,bruce-heller-sayrafi,yang}.  It would be interesting to determine if this was true in general. 
\end{remark}

\section*{Acknowledgments}  We are very grateful to Andrew Hanlon, Jeff Hicks, and Oleg Lazarev for patiently talking to us about their work and for several inspiring conversations. We only found this approach because of our efforts to understand their beautiful results. We also thank Christine Berkesch, Lauren Cranton Heller, Mahrud Sayrafi, and Jay Yang for helpful comments and discussions. Finally, we thank the anonymous referee for many helpful suggestions.

\bibliographystyle{amsalpha}
\bibliography{Bibliography}

\Addresses

\end{document}